 \newcommand{\ra}[1]{\renewcommand{\arraystretch}{#1}}
\newtheorem{theorem}{Theorem}[section]
\newtheorem*{theorem*}{Theorem}
\newtheorem{lemma}[theorem]{Lemma}
\newtheorem{question}[theorem]{Question}
\theoremstyle{definition}
\newtheorem{definition}[theorem]{Definition}
\newtheorem{remark}[theorem]{Remark}
\newtheorem{example}[theorem]{Example}
\newcommand{\mb}{\mathbf}
\newcommand{\fix}{\operatorname{Fix}}
\title{Determinants of simple theta curves and symmetric graphs}
\author{Matthew Elpers}
\author{Rayan Ibrahim}
\author{Allison H. Moore}
\address{Matthew Elpers\\ Department of Mathematics \& Applied Mathematics \\ 
  North Carolina State University \\Raleigh, NC 27606 \\ USA  
   }
\address{Rayan Ibrahim\\ Department of Mathematics \& Applied Mathematics \\ 
   Virginia Commonwealth University \\Richmond, VA 23284 \\ USA  
   }
\address{Allison H. Moore\\ Department of Mathematics \& Applied Mathematics \\ 
   Virginia Commonwealth University \\Richmond, VA 23284 \\ USA  
   }
\subjclass{57K10, 57M15 (primary), 05C10, 05C22, 05C50 (secondary)}
\begin{document}

\begin{abstract} 

A theta curve is a spatial embedding of the $\theta$-graph in the three-sphere, taken up to ambient isotopy. 
We define the determinant of a theta curve as an integer-valued invariant arising from the first homology of its Klein cover. 
When a theta curve is simple, containing a constituent unknot, we prove that the determinant of the theta curve is the product of the determinants of the constituent knots. Our proofs are combinatorial, relying on Kirchhoff's Matrix Tree Theorem and spanning tree enumeration results for symmetric, signed, planar graphs.   

\end{abstract}
\maketitle

\section{Introduction}

A \emph{theta curve} $\vartheta$ is an embedding of the $\theta$-graph in the three-sphere, up to equivalence by ambient isotopy. The $\theta$-graph is the unique abstract graph consisting of two vertices connected by three parallel edges. Theta curves and other spatial graphs are generalizations of knots and links. In this article we study an integer-valued invariant of theta curves that we call the determinant $\det(\vartheta)$. Like the well-known determinant of links, this invariant can be defined as the order of the torsion subgroup of the first homology of a certain branched covering space (see Definition \ref{definition}). 
Every theta curve contains three \emph{constituent knots} $K_{ij}$, formed by taking pairs of edges $i, j \in \{a, b, c\}$. 
A \emph{simple} theta curve is one which contains at least one constituent knot that is unknotted \cite{Turaev}. 
For example, amongst prime theta curves of up to seven crossings, all 90 in the Litherland-Moriuchi table \cite{Moriuchi, Litherland} are simple (see Table \ref{det table}). 
The relationship between $\det(\vartheta)$ for simple theta curves and the determinants of its constituent knots is described by the following statement. 

\begin{restatable}[]{thm}{detprod}
\label{main}
Let $\vartheta$ be a simple theta curve with constituent knots $K_{ab}, K_{ac}, K_{bc}$. Then 
\[ \det(K_{ab})\cdot \det(K_{ac}) \cdot \det(K_{bc}) = \det(\vartheta). \] 
\end{restatable}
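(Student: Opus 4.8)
The plan is to realise $\det(\vartheta)$ as a signed spanning-tree count in a $\Gamma$-symmetric signed plane graph, where $\Gamma=\Z/2\oplus\Z/2$ is the Klein four-group, and then to split that count over the characters of $\Gamma$. Since $\vartheta$ is simple, assume $K_{bc}$ is the unknot, so $\det(K_{bc})=1$ and it suffices to prove $\det(\vartheta)=\det(K_{ab})\det(K_{ac})$. Recall that $H_1(S^3\setminus\vartheta)\cong\Z^2$ is generated by the edge meridians $\mu_a,\mu_b,\mu_c$ subject to $\mu_a+\mu_b+\mu_c=0$, and that the Klein cover $\Sigma\to S^3$ is the $\Gamma$-cover, branched over $\vartheta$, classified by the surjection sending $\mu_a,\mu_b,\mu_c$ to the three nonzero elements of $\Gamma$. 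A Riemann--Hurwitz computation at the two trivalent vertices shows that the branch link of a vertex is again $S^2$, so $\Sigma$ is a closed orientable $3$-manifold; and splitting $H_1(\Sigma;\mathbb{Q})$ into $\Gamma$-isotypic summands --- the trivial one being $H_1(S^3;\mathbb{Q})=0$, each nontrivial one a piece of the rational homology of a double branched cover of a knot in $S^3$, hence $0$ --- shows $H_1(\Sigma)$ is finite, so $\det(\vartheta)=|H_1(\Sigma)|$. Finally $\Sigma\to S^3$ factors through three double covers $\Sigma\to M_{ij}\to S^3$, and a local model at the vertices identifies $M_{ij}$ with the double branched cover $\Sigma_2(K_{ij})$, so $|H_1(M_{ij})|=\det(K_{ij})$.

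Next I would build the graph model. Fix a diagram $D$ of $\vartheta$ on $S^2$ in which $b\cup c$ is a crossingless round circle through both vertices (possible because $K_{bc}$ is unknotted), so that $D$ presents the arc $a$ attached to a trivial circle. From a checkerboard-type colouring of $D$ --- with the trivalent vertices handled appropriately --- extract a connected signed plane graph $G$ together with a $\Gamma$-valued labelling of its edges recording the meridian classes, and let $\widetilde G\to G$ be the induced $\Gamma$-cover, a signed plane graph carrying a $\Gamma$-action. Assembling $\Sigma$ from $D$ by the standard handle recipe should present $H_1(\Sigma)$ by a reduced signed Goeritz matrix of $\widetilde G$, i.e. a principal minor of its signed Laplacian; the weighted Matrix--Tree Theorem then identifies $\det(\vartheta)$ with the absolute value of the signed spanning-tree sum $\sum_{T}\prod_{e\in T}\sgn(e)$ of $\widetilde G$.

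Then I would apply the spanning-tree enumeration for symmetric signed plane graphs: the signed Laplacian of the $\Gamma$-cover $\widetilde G$ is simultaneously block-diagonalised by the characters of $\Gamma$, and its signed spanning-tree sum factors as the product, over the three nontrivial characters $\chi$ of $\Gamma$, of the signed spanning-tree sums of the corresponding $\chi$-twisted quotient graphs, the trivial-character block contributing only a unit (here the crossingless circle $b\cup c$ makes the trivial sector a tree, which is the point at which simplicity is used). The three nontrivial characters are dual to the three order-two subgroups of $\Gamma$ and hence correspond bijectively to the constituent knots; for each pair $ij$ the $\chi$-twisted graph is a signed Goeritz graph of the constituent subdiagram $D_{ij}\subset D$, so its signed spanning-tree sum equals $\pm\det(K_{ij})$ by the classical spanning-tree formula for the knot determinant. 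Multiplying the three factors yields $\det(\vartheta)=\det(K_{ab})\det(K_{ac})\det(K_{bc})$.

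The conceptual steps above are routine; the work --- and the main obstacle --- is the graph model of the second paragraph and its exact comparison with the topology. In particular: the $\theta$-graph has odd-valent vertices, so a diagram of $\vartheta$ is not checkerboard colourable in the ordinary sense, and one must set up a substitute colouring and Goeritz matrix that genuinely compute $H_1(\Sigma)$; one must check that the edge-labels assemble to the $\Gamma$-cover classified by the meridian surjection; and, most delicately, one must verify that the factorisation over $\Gamma$ leaves no extraneous factor --- no spurious power of $2$ coming from the failure of $\Z[\Gamma]$ to split over $\Z$, and no nontrivial contribution from the trivial character. It is precisely this last point where the simplicity hypothesis does the essential work, and where the equality is expected to fail for non-simple theta curves.
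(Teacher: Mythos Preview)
Your outline is a plausible character-theoretic strategy, but as you yourself note in the final paragraph, the second step is not a proof: constructing a checkerboard/Goeritz model for a trivalent diagram, verifying it presents $H_1(\Sigma_\vartheta)$, and controlling the $2$-primary ambiguity in the $\Z[\Gamma]$-splitting are all left open. Those are exactly the hard parts, so at present this is a programme rather than a proof.

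The paper avoids all of these obstacles by using simplicity \emph{earlier} and reducing to a single $\Z/2$ rather than the full Klein four-group. Since one constituent (say $K_{ac}$) is unknotted, the Klein cover factors as $\Sigma_\vartheta \cong \Sigma_2\bigl(\Sigma_2(S^3,K_{ac}),\tilde e_b\bigr) = \Sigma_2(S^3,K)$ where $K=\tilde e_b$ is an honest \emph{strongly invertible knot} $(K,h)$ in $S^3$. Thus $\det(\vartheta)=\det(K)$ is an ordinary knot determinant, computable from the ordinary Tait graph $G$ of a symmetric diagram of $K$. No Goeritz theory for trivalent graphs is needed at all. The remaining involution $h$ gives $G$ a $\Z/2$-symmetry, and the paper then invokes an existing spanning-tree factorisation for such graphs (Zhang--Yan, generalising Ciucu--Yan--Zhang): $\tau(G)=2^{m-1}\tau(G_L)\tau(G_R)$, where $m=|V_C|$ counts axis vertices. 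The substantive lemma (Lemma~\ref{factors}) identifies $2^{m-1}\tau(G_R)$ and $\tau(G_L)$ with the tree weights of the Tait graphs of the two nontrivial constituents $K_{ab}$ and $K_{bc}$, via an explicit comparison of how crossings along and across the axis descend to clasps and fingers in the quotient. The power of $2$ is absorbed cleanly by this identification, so the bookkeeping you worried about never arises.

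In short: your $\Gamma$-isotypic decomposition is the correct moral picture, but the paper's route---pass to the strongly invertible knot first, then apply a $\Z/2$ spanning-tree theorem---turns a speculative construction into a two-line computation once Lemma~\ref{factors} is in hand.
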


Our method for proving Theorem \ref{main} is combinatorial. 
By assumption one constituent, say $K_{ac}$, is an unknot with $\det(K_{ac})=1$. We relate the determinants of the other two constituent knots with counts of weighted spanning trees of Tait graphs that are derived from a diagram of $\vartheta$.  
We were surprised to find that the determinants of constituent knots of a theta curve provide a geometric interpretation of a purely graph-theoretic spanning tree enumeration formula. 
More specifically, Ciucu, Yan and Zhang applied the Matrix Tree Theorem to enumerate the spanning trees of a graph admitting an involutive symmetry via a product formula involving two smaller graphs \cite{CYZ, ZY}. In our context, we realize a graph that admits an involutive symmetry as the Tait graph of a strongly invertible knot (see Section \ref{sec: knots}) corresponding to the theta curve. The Goeritz matrix plays the role of the graph Laplacian, the determinant of which calculates the tree weight. We explicitly identify the two factors in the spanning tree enumeration formula with the determinants of the constituent knots of the theta curve, as realized by their Tait graphs.

\section{Knots and spatial theta curves}
\label{sec: knots}

We consider knots and theta curves to be smoothly embedded in the three-sphere, up to equivalence by ambient isotopy. Label edges of a theta curve by the letters $\{a, b, c\}$,  which may be thought of as non-identity elements of the Klein group $V\cong \mathbb{Z}_2\times \mathbb{Z}_2$. A theta curve is a special type of \emph{Klein graph} (see \cite{GilleRobert}), meaning a trivalent spatial graph endowed with a 3-edge coloring. Theta curves are also 3-Hamiltonian, meaning all of its $\{i,j\}$-colored subgraphs are connected. This means its constituents are knots, rather than links. 

Recall that the cyclic double cover $\widehat{X}_2$ of the complement of a knot, $X=S^3-N(K)$, is the regular covering space corresponding with the kernel of a homomorphism $\pi_1(X, x) \rightarrow H_1(X; \mathbb{Z}) \rightarrow \mathbb{Z}  \rightarrow \mathbb{Z}_2$. 
The branched double cover $\Sigma_2(S^3, K)$ may be obtained by gluing a solid torus to the boundary of $\widehat{X}$ via the map $(z_1, z_2)\mapsto (z_1, z_2^2)$  to extend the covering to a branched covering map $\Sigma_2(S^3, K) \rightarrow S^3$. 
It is a standard fact of knot theory that the branched double cover of a knot is a rational homology sphere and the determinant of a knot may be defined by $\det(K):=|H_1(\Sigma_2(S^3, K); \mathbb{Z})|$. 
See for reference \cite[Chapter 7]{Lickorish}. 

Given a theta curve $\vartheta\in S^3$ and complement $Y=S^3-N(\vartheta)$, we may similarly construct a covering space corresponding to the map $\pi_1(Y, x) \rightarrow  H_1(Y; \mathbb{Z}) \rightarrow \mathbb{Z}_2\times \mathbb{Z}_2$. This can be completed to a closed, oriented 3-manifold acted on by $V$ by gluing solid cylinders and cubes to the boundary in a procedure explicitly described by Gille and Robert in \cite[Proposition 2.6]{GilleRobert}. This manifold is the \emph{Klein cover} $\Sigma_\vartheta:=\Sigma(S^3, \vartheta)$ and has $\vartheta$ as the branching locus. 

\begin{definition}
\label{definition}
Let $\Sigma_\vartheta$ denote the Klein cover of a theta curve in $S^3$. The \emph{determinant} $\det(\vartheta)$ of $\vartheta$ is the order of $H_1(\Sigma_\vartheta ;\mathbb{Z})$. 
\end{definition}
One may visualize the Klein cover by iterating the branched double cover construction. 
One first constructs $\Sigma_2(S^3, K_{ac})$, branched over one of the constituent knots $K_{ac}=e_a\cup e_c$, then constructs a second branched covering of the manifold $\Sigma_2(S^3, K_{ac})$ branched over the knot $\tilde{e}_b$ that is the lift of the edge $e_b$. 
This will also yield $\Sigma_\vartheta \cong \Sigma_2(\Sigma_2(S^3, K_{ac}), \tilde{e}_b)$. 
The Klein cover of $\vartheta$ is unique, and so the order of $a, b, c$ in this procedure does not matter. 
A proof that the Klein cover of a spatial Klein graph in $S^3$ is unique up to diffeomorphism may be found in \cite[Proposition 2.8]{GilleRobert}. 
Consequently,  the determinant $\det(\vartheta)$ is a well-defined integer invariant of theta curves in the three-sphere.

\subsection{Simple thetas and strongly invertible knots}
\label{strongly invertible}

Consider the case that $\vartheta$ is simple. 
Up to relabeling, we may assume $K_{ab}$ is an unknot. 
Then $\vartheta$, together with this unknotted constituent, corresponds with a strongly invertible knot in the three-sphere as follows. 
The branched cover $\Sigma_2(S^3, K_{ac})$ is diffeomorphic to $S^3$, and the lift $\tilde{e}_b$ consists of two pre-images of $e_b$ joined at the two vertices of $\vartheta$ on the branching set. 
Recall that a knot $K$ in $S^3$ is \emph{strongly invertible} if there is an orientation-preserving involution  $h$ on $S^3$ such that $h(K)=K$ and $\fix(h)$ is a circle intersecting $K$ in two points \cite{Sakuma}. 
In our context, $\tilde{e}_b$ is strongly invertible. We write $\tilde{e}_b=(K, h)$ to emphasize the involution. 
For the reverse correspondence, let $(K, h)$ be any strongly invertible knot in the three-sphere. 
As a consequence of the Smith conjecture, $\fix(h)$ is unknotted and by definition, $(K, h)$ intersects $\fix(h)$ in two points. 
The quotient $K/h$ is an embedded closed arc. 
Thus $ \fix(h)\cup K/h= e_a\cup e_b  \cup e_c$ is a simple theta curve.

Recall that the branched double cover of any knot in $S^3$ is a rational homology sphere with first homology of odd order. Thus in the case $\vartheta$ is simple, $\det(\vartheta) = \det(K, h)$ is an odd integer. 
Note also that in the quotient under the action of the involution, a right-handed (respectively, left-handed) crossing in $(K, h)$ descends to a right-handed clasp in $\vartheta$, as in Figure \ref{incidence clasp}. We will make use of this observation later.

\begin{figure}[h!]	
\begin{tikzpicture}

\node[anchor=south west,inner sep=0] at (0,0) {\includegraphics[width=3in]{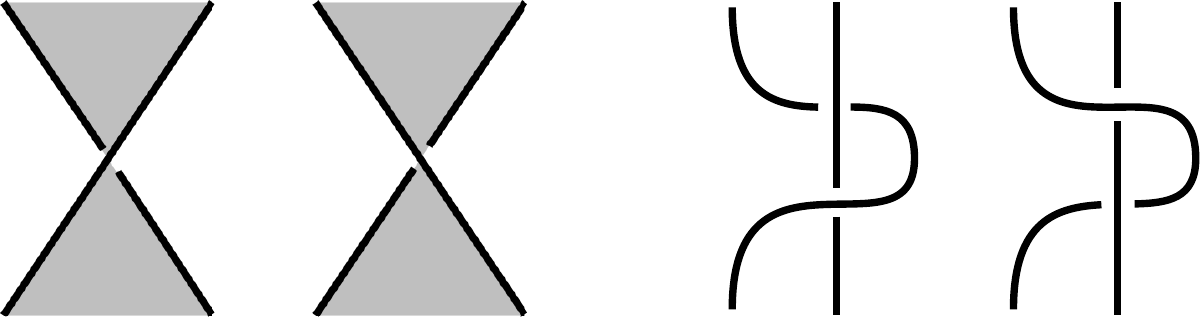}};

\footnotesize
\node[label=above right:{$\xi=+1$}] at (-1.2,0.6){};
\node[label=above right:{$\xi=-1$}] at (1,0.6){};
\node[label=above right:{$\eta=+1$}] at (3.8,0.6){};
\node[label=above right:{$\eta=-1$}] at (5.8,0.6){};
\end{tikzpicture}
\caption{Sign conventions for incidence numbers of crossings in a checkerboard shading (left) and for clasps in the quotient theta curve (right).  The shading indicates regions $X_i$ colored by an assignment $\varphi(X_i) = 0$. Both $\xi$ and $\eta$ are independent of strand orientation.}
\label{incidence clasp}
\end{figure}

\subsection{Goeritz matrices}
\label{subsec Goeritz}
The determinant of a knot or link can be calculated combinatorially as the determinant of an integral matrix associated to a knot diagram, due to a construction of Goeritz and Trotter \cite{Goeritz, Trotter}.  We review this following \cite[Chapter 9]{Lickorish} and apply it to simple theta curves below. 
Let $K$ be a knot with diagram $D_K$. Then $D_K$ admits two checkerboard colorings $\varphi$ of the regions $X=\{X_0,\dots, X_m\}$ of $D_K$, that is, there are two assignments $\varphi:X \to \{0,1\}$ where $\varphi(X_i) \ne \varphi(X_j)$ when $X_i$ and $X_j$ share a boundary curve. 

To each crossing $c$ of $D_K$ we associate a sign $\xi(c)$ with the convention in Figure \ref{incidence clasp}. 
Let $\{X_i \ | \ \varphi(X_i) = 0\} = \{B_0,\dots,B_n\}$ and let $C_{ij}$ be the set of crossings where $B_i$ and $B_j$ meet. 
We may then associate an $(n+1)\times (n+1)$ matrix $\widetilde{Q}_{D_K}$ to the diagram $D_K$ of $K$ with respect to the choice in shading $\varphi$. 
The matrix $\widetilde{Q}_{D_K}=[q_{ij}]$ is defined by
\[
	q_{ij} = \begin{cases}
	- \sum\limits_{c \in C_{ij}} \xi(c), & \text{if } i\neq j\\
	- \sum\limits_{k \neq i} q_{ik}, & \text{if } i= j.
	\end{cases}
\]
From this, $\det(K) = |\det(Q_{D_K})|$ where the \emph{Goeritz matrix} $Q_{D_K}$ is the $n\times n$ matrix obtained from $\widetilde{Q}_{D_K}$ by deleting any row and column. The result is independent of the choices in the knot diagram, the checkerboard coloring, labelling of regions, and the row and column selected for deletion. 

\section{Symmetric weighted graphs}
Let $G= (V(G), E(G))$ denote a graph and its vertex and edge sets. We will assume that graphs are undirected, but permit multi-edges, self-loops and edge weights. 
Recall that a \textit{spanning tree} $T\subseteq G$ is a connected acyclic subgraph with $V(T) = V(G)$. 
For a graph $G$ endowed with edge weights $\omega(e)$, define $\omega(G) \coloneqq \prod_{e\in E(G)} \omega(e)$. 
Define the \textit{tree weight} of $G$ by
\begin{equation}
\label{tree weight}
\tau(G) = \sum_{T \subseteq G} \omega(T)= \sum_{T \subseteq G} \prod_{e\in E(T)} \omega(e),
\end{equation}
where the sum is over spanning trees of $G$. For graphs with edge weights all equal one, $\tau(G)$ is simply the number of spanning trees of $G$. 
We restrict our attention to edge weights $\omega(e)$ in the multiplicative group $\{-1, +1\}$, and so tree weights will take on integer values. There is a well known method to count spanning trees, or more generally the tree weight, using the graph Laplacian. 
 
\begin{definition}
Let $G$ be a weighted graph with $V(G) = \{v_0,\dots,v_n\}$. An $(n+1)\times (n+1)$ matrix called the \textit{Laplacian} of $G$, $\widetilde{L}_G$ = $[\ell_{ij}]$, is defined by
\begin{equation}
\label{laplacian}
\ell_{ij} =
\begin{cases}
-\omega_{ij} &\text{ if } i \ne j  \\ 
\sum_{k\ne i} \omega(e_{ik}) &\text{ if } i = j. 
\end{cases}
\end{equation}
Here, $\omega_{ij}$ is the sum of edge weights over all edges connecting $v_i$ and $v_j$.
\end{definition}
The following theorem is often attributed to Kirchhoff, and different versions are due to Bott-Mayberry \cite{BottMayberry} and Tutte \cite{Tutte}.
See also the exposition in \cite{Biggs}.

\begin{theorem}[Matrix Tree Theorem]\label{thm:Kirchhoff}
Let $G$ be a graph and let $L$ be the reduced Laplacian of $G$, obtained by deleting any row and column from $\tilde{L}$. Then $|\det(L)| = \tau(G)$.
\end{theorem}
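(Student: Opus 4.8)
The plan is to prove this by the Cauchy--Binet proof of Kirchhoff's theorem, carrying the edge weights through the computation (this is essentially the argument in \cite{Biggs}). First I would fix an arbitrary orientation of every edge of $G$, treating each edge of a multi-edge separately, and form the signed incidence matrix $D$, an $(n+1)\times m$ matrix with $m=|E(G)|$ whose $(v,e)$ entry is $+1$ if $v$ is the head of $e$, $-1$ if $v$ is the tail, and $0$ otherwise; a self-loop contributes the zero column, which is harmless since no spanning tree uses a loop. Setting $W=\operatorname{diag}\big(\omega(e)\big)_{e\in E(G)}$, a direct expansion of $(DWD^{\mathsf T})_{ij}=\sum_{e}D_{ie}\,\omega(e)\,D_{je}$ gives $\sum_{e\ni v_i}\omega(e)$ on the diagonal and $-\omega_{ij}$ off the diagonal, so $\widetilde{L}_G=DWD^{\mathsf T}$ in the notation of \eqref{laplacian}. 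Deleting the row and column of the chosen vertex $v_0$ then yields $L=D_0WD_0^{\mathsf T}$, where $D_0$ is $D$ with the $v_0$-row removed.

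Next I would apply the Cauchy--Binet formula to the factorization $L=(D_0W)(D_0^{\mathsf T})$ of the $n\times n$ matrix $L$ as a product of an $n\times m$ and an $m\times n$ matrix. Pulling the diagonal scaling out of the columns, this gives
\[
\det(L)=\sum_{\substack{S\subseteq E(G)\\ |S|=n}}\det\big((D_0W)[S]\big)\,\det\big((D_0[S])^{\mathsf T}\big)=\sum_{\substack{S\subseteq E(G)\\ |S|=n}}\omega(S)\,\det\big(D_0[S]\big)^2,
\]
where $D_0[S]$ is the square submatrix on the columns indexed by $S$ and $\omega(S)=\prod_{e\in S}\omega(e)$.

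The heart of the matter, and the step I expect to be the main obstacle, is the classical combinatorial lemma that $\det\big(D_0[S]\big)=\pm1$ when $S$ is the edge set of a spanning tree of $G$ and $\det\big(D_0[S]\big)=0$ otherwise. Since $|S|=n$ and $|V(G)|=n+1$, the subgraph $(V(G),S)$ is a spanning tree precisely when it is acyclic. If $S$ contains a cycle, traversing it produces a $\{\pm1\}$-linear relation among the corresponding columns of $D$, hence of $D_0$, so $\det\big(D_0[S]\big)=0$. If $S$ is acyclic, hence a spanning tree, I would induct on $n$ (with $n=1$ immediate): a tree on at least two vertices has at least two leaves, so there is a leaf $v\neq v_0$; its unique incident edge of $S$ gives a row of $D_0[S]$ with a single nonzero entry $\pm1$, and cofactor expansion along that row reduces to the $v_0$-reduced incidence matrix of the spanning tree obtained from $S$ by deleting $v$ and its pendant edge, which is $\pm1$ by the inductive hypothesis.

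Substituting back, $\det\big(D_0[S]\big)^2$ is the indicator that $S$ is a spanning tree, so
\[
\det(L)=\sum_{T\subseteq G}\omega(T)=\tau(G)
\]
by the definition \eqref{tree weight} of the tree weight, and in particular $|\det(L)|=|\tau(G)|$, which equals $\tau(G)$ whenever $\tau(G)\ge 0$ — as in the unweighted case and in the applications of the theorem in this paper. The remaining points are bookkeeping: loops and antiparallel $\pm1$ edges cancel consistently both in $\widetilde{L}_G=DWD^{\mathsf T}$ and in $\tau(G)$, and the independence of $\det(L)$ from the orientation, the choice of $v_0$, and the deleted row and column is visible directly from the derived formula for $\det(L)$.
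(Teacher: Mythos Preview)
The paper does not prove Theorem~\ref{thm:Kirchhoff}; it is stated as a classical result with references to Kirchhoff, Bott--Mayberry, Tutte, and the exposition in Biggs, and is then used as a black box. Your Cauchy--Binet argument is exactly the standard proof (and is essentially the one in the cited reference \cite{Biggs}), and it goes through cleanly for the weighted Laplacian defined in \eqref{laplacian}. In fact you prove the sharper identity $\det(L)=\tau(G)$ with no absolute value; the absolute value in the paper's formulation is harmless for unweighted graphs but is, as you implicitly note, slightly imprecise in the signed setting, where $\tau(G)$ may be negative and the correct statement is $\det(L)=\tau(G)$ (equivalently $|\det(L)|=|\tau(G)|$). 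One small point: the paper's phrasing ``deleting any row and column'' allows deleting row $i$ and column $j$ with $i\neq j$; your argument as written handles the principal case $i=j$, and the general case follows from the standard fact that all cofactors of $\widetilde{L}$ agree up to sign since the all-ones vector lies in both its left and right kernels.
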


\subsection{Tait graphs}
The procedure for calculating the determinant from a Goeritz matrix yields an equivalent graph theoretic method using the \emph{Tait graph} of a knot diagram. Let $\varphi$ be a checkerboard coloring of a diagram $D_K$ of $K$. The fact that there exists a checkerboard coloring $\varphi$ for any $D_K$ can be proven in the following way: By forgetting crossing information, a knot diagram yields a planar four-valent graph $G$. Observe that the dual $G^\perp$ cannot contain any odd cycles, otherwise $G$ would contain a vertex of odd degree. Therefore $G^\perp$ is bipartite, so the faces of $G$ are two-colorable. 

Thus, the coloring determines a pair of planar dual graphs $G$ and $G^\perp$. The vertices $V(G)$ correspond with the shaded regions  $B = \{X_i \ | \ \varphi(X_i) = 0\}$ and the vertices $V(G^\perp)$ with unshaded regions  $W = \{X_i \ | \ \varphi(X_i) = 1\}$. Edges in both graphs correspond to incidences between regions at crossings, with edge weight $\omega(e)=\omega(e^\perp) = \xi(c)$. Examples of Tait graphs are shown in Figure \ref{example-948}. 

Combining the Matrix Tree Theorem and the Goeritz matrix formulation of the determinant of a knot, we have
\[
	\det(K) = |\det(Q_{D_K})|  = |\det(L_G)| = \tau(G)
\]
where $Q_{D_K}$ is the reduced Goeritz matrix of $K$ corresponding with any diagram $D_K$ of $K$, $L_G$ is the reduced Laplacian of the corresponding Tait graph $G$, and where $\tau(G)$ is tree weight of $G$. 

We now collect several lemmas pertaining to signed graphs that we will need in later sections. 

It is a standard result in graph theory that the number of spanning trees of a planar graph $G$ is equal to the number of spanning trees of $G^\perp$. For graphs with arbitrary edge weights, this statement is false; for a counterexample, take a triangle with edge weights 1, 2, 3. For edge weights in the multiplicative group  $\{-1, +1\}$, though, the statement generalizes as follows. 

\begin{lemma}
\label{dual tree weight}
For planar graphs with edge weights in $\{-1, +1\}$, $|\tau(G)| = |\tau(G^\perp)|$. 
\end{lemma}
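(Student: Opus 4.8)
The plan is to compare the tree weights $\tau(G)$ and $\tau(G^\perp)$ directly via the classical bijection between spanning trees of a plane graph and spanning trees of its dual. Recall that if $G$ is a connected plane graph and $T$ is a spanning tree of $G$, then the set of dual edges $\{e^\perp : e \notin E(T)\}$ forms a spanning tree $T^\perp$ of $G^\perp$, and $T \mapsto T^\perp$ is a bijection between the spanning trees of $G$ and those of $G^\perp$. Since by construction the edge weights satisfy $\omega(e) = \omega(e^\perp)$, the weight of the complementary dual tree is
\[
\omega(T^\perp) = \prod_{e \notin E(T)} \omega(e^\perp) = \prod_{e \notin E(T)} \omega(e) = \frac{\omega(G)}{\omega(T)}.
\]
Here I am using that all edge weights lie in $\{-1,+1\}$, so they are invertible and every edge contributes either to $\omega(T)$ or to $\omega(T^\perp)$ but not both; the product over all edges is the fixed quantity $\omega(G) \in \{-1,+1\}$.

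Summing over all spanning trees $T$ of $G$ and using the bijection, I would obtain
\[
\tau(G^\perp) = \sum_{T \subseteq G} \omega(T^\perp) = \sum_{T \subseteq G} \frac{\omega(G)}{\omega(T)} = \omega(G) \sum_{T \subseteq G} \omega(T)^{-1}.
\]
Because each $\omega(T) \in \{-1,+1\}$ we have $\omega(T)^{-1} = \omega(T)$, so the right-hand side equals $\omega(G)\,\tau(G)$. Since $\omega(G) = \pm 1$, taking absolute values gives $|\tau(G^\perp)| = |\tau(G)|$, as desired. I should also dispatch the disconnected case: if $G$ is disconnected it has no spanning tree, so $\tau(G) = 0$; the planar dual of a disconnected plane graph is then not well-behaved as a simple object, but one checks that $G^\perp$ likewise has no spanning tree (dually, $G$ having more than one component forces $G^\perp$ to contain a cut vertex structure preventing a spanning tree), so both sides are zero. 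In practice the graphs arising as Tait graphs of knot diagrams are connected, so this edge case can be handled in a sentence.

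The only real subtlety — and the step I would be most careful about — is the identity $\omega(T)^{-1} = \omega(T)$, i.e. the essential use of the hypothesis that weights lie in the multiplicative group $\{-1,+1\}$ rather than in $\mathbb{Q}^\times$; this is exactly what fails for the triangle with weights $1,2,3$ mentioned before the lemma, and it is worth a parenthetical remark making clear where it enters. Everything else is the standard plane-duality bijection for spanning trees, which I would cite rather than reprove.
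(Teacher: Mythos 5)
Your proposal is correct and follows essentially the same route as the paper: both use the plane-duality bijection $T \mapsto (G-T)^\perp$ together with the identity $\omega(T)\,\omega(T^\perp) = \omega(G) \in \{-1,+1\}$. You merely package the paper's two-case analysis into the single cleaner identity $\tau(G^\perp) = \omega(G)\,\tau(G)$, which is a fine (slightly tidier) presentation of the same argument.
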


\begin{proof}
Let $G$ be a planar graph and $G^\perp$ its dual. 
There is a bijection $E(G) \to E(G^\perp)$ which sends $e \in E(G)$ to $e^\perp \in E(G^\perp)$. 
In particular, every edge $e$ borders faces $F_1$ and $F_2$ and $e^\perp = (F_1,F_2) \in E(G^\perp)$ is the edge uniquely corresponding to $e$. 
As a result, there is a bijection $f$ that associates each spanning tree $T\subseteq G$ with a spanning tree $f(T) \subseteq G^\perp$, where 
	\[ f(T) = G^\perp - \{e^\perp : e \in T\} = (G - T)^\perp. \] 
See for example \cite{Lovasz}. 
We extend $f$ to weighted trees by assigning dual edges the same weight, i.e., $\omega(e) = \omega(e^\perp)$.

By assumption $\omega(e) \in \{-1,1\}$ for all $e \in E(G)$. Recall that $\omega(G) \coloneqq \prod_{e\in E(G)} \omega(e)$. 
From $f$ we can deduce that for any spanning tree $T \subseteq G$,
\[ \omega(G) = \tau(T)\tau(f(T)) \]
and there are two cases.

\begin{enumerate}[leftmargin=0.5cm]
\item[] \textbf{Case 1.} $\omega(G) = 1$. Then it must be the case that for all spanning trees $T$, $\tau(T) = \tau(f(T)) = 1$ or $\tau(T) = \tau(f(T)) = -1$. That is, for all spanning trees $T$, $\tau(T) = \tau(f(T))$.

\item[] \textbf{Case 2.} $\omega(G) = -1$. Then it must be the case that for all spanning trees $T$, $\tau(T) =1$ and $\tau(f(T)) = -1$ or $\tau(T) =-1$ and $\tau(f(T)) = 1$. That is, for all spanning trees $T$, $\tau(T) = - \tau(f(T))$.
\end{enumerate}

So either $\tau(G) = \tau(G^\perp)$ or $\tau(G) = -\tau(G^\perp)$.
\end{proof}

\begin{lemma}
\label{multiedge}
	Let $G$ be a graph containing an edge $e = (v, w)$ of weight $\omega(e)$. Let $G'$ be $G-\{e\} \cup \{ e_1, e_2\}$ where $e_1 =(v, w)= e_2$ are edges of weight $\frac{1}{2}\omega(e)$. Then $\tau(G) = \tau(G')$. 
\end{lemma}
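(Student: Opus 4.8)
The plan is to classify the spanning trees of $G'$ by how they meet the parallel pair $\{e_1,e_2\}$ and to match them against the spanning trees of $G$ according to whether they contain $e$. Since $e_1$ and $e_2$ share both endpoints, they form a $2$-cycle in $G'$, so no spanning tree of $G'$ can contain both; hence each spanning tree $T'\subseteq G'$ either contains neither $e_1$ nor $e_2$, or contains exactly one of them.

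First I would handle the trees avoiding both $e_1$ and $e_2$: these are exactly the subgraphs of $G'-\{e_1,e_2\}=G-\{e\}$ that span and are acyclic, i.e.\ precisely the spanning trees of $G$ not containing $e$, and this identification preserves edge weights. Next, for a spanning tree $T\subseteq G$ with $e\in T$, replacing $e$ by $e_1$ or by $e_2$ produces two spanning trees of $G'$, and conversely every spanning tree of $G'$ using exactly one of $e_1,e_2$ arises in this way from a unique such $T$ (replace the parallel edge it uses by $e$). Each of the two trees $(T-e)\cup\{e_1\}$ and $(T-e)\cup\{e_2\}$ has weight $\tfrac12\omega(e)\cdot\omega(T-e)=\tfrac12\omega(T)$, so the pair contributes $\omega(T)$ in total, matching the contribution of $T$ to $\tau(G)$. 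Summing over the two cases gives $\tau(G')=\sum_{T:\,e\notin T}\omega(T)+\sum_{T:\,e\in T}\omega(T)=\tau(G)$.

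Alternatively, and more quickly, one notes that the weighted Laplacian of \eqref{laplacian} depends on the edge set only through the numbers $\omega_{ij}$, the total weight of all edges joining $v_i$ and $v_j$. Splitting one edge of weight $\omega(e)$ joining $v$ and $w$ into two edges of weight $\tfrac12\omega(e)$ leaves every $\omega_{ij}$, and hence every entry of $\widetilde{L}$, unchanged; so $\widetilde{L}_{G'}=\widetilde{L}_G$, the reduced Laplacians agree, and Theorem \ref{thm:Kirchhoff} yields $\tau(G')=|\det L_{G'}|=|\det L_G|=\tau(G)$. There is no real obstacle in either argument; the only point requiring care is the bookkeeping that a spanning tree cannot use both parallel edges and that the two factors of $\tfrac12$ recombine to $\omega(e)$ (this is also, incidentally, the one place where edge weights outside $\{-1,+1\}$ appear, which is harmless at this intermediate stage).
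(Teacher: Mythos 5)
Your first argument is correct and is essentially the paper's proof: the paper simply observes that each spanning tree of $G$ containing $e$ corresponds to exactly two spanning trees of $G'$, each of weight $\tfrac12\omega(T)$, while trees avoiding $e$ are untouched; you spell out the same two-case matching in full detail. Your alternative Laplacian argument is a genuinely different (and arguably slicker) route: since the entries of $\widetilde{L}$ depend only on the totals $\omega_{ij}$, splitting $e$ into two parallel edges of half the weight leaves $\widetilde{L}$ literally unchanged, and the Matrix Tree Theorem does the rest. This buys you an argument with no tree-by-tree bookkeeping and which generalizes immediately to arbitrary real weights, whereas the paper's combinatorial bijection keeps the spanning-tree picture explicit, which is the viewpoint actually used later (e.g.\ in Lemma \ref{subdivide} and Lemma \ref{factors}). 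One small caveat in your Laplacian version: as stated in the paper, Theorem \ref{thm:Kirchhoff} only gives $|\det(L)|=\tau(G)$, and with weights in $\{-1,+1\}$ the tree weight can be negative, so the chain $\tau(G')=|\det L_{G'}|=|\det L_G|=\tau(G)$ should really be phrased via the signed weighted Matrix Tree Theorem ($\det L=\tau$ with the same row and column deleted), or simply by noting $\widetilde{L}_{G'}=\widetilde{L}_G$ forces equality of the (signed) tree weights; this is a presentational point, not a gap.
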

\begin{proof}
For every spanning tree $T$ in $G$ that contains $e$ there exist exactly two spanning trees in $G'$, each of tree weight $\frac{1}{2}\omega(T)$.  
\end{proof}

A version of the following lemma is proved in \cite[Lemma 6]{CYZ} for unweighted graphs.  
Here, we are interested in counting spanning trees where the graphs inherit edge weights from the crossings of knot diagrams, and edge subdivisions will occur in the Tait graphs of our constituent knots. 
Hence, we extend their lemma to the specific case of graphs with edge weights $\omega(e)\in\{-1, +1\}$.

\begin{lemma}
\label{subdivide}
	Let $G_0$ be a graph with vertices $V_0$ and edges $E_0$. 
	Let $a$, $b$ and $x$ be three vertices distinct from $V_0$. 
	Construct a graph $G = (V, E)$ by taking $V = V_0 \cup \{a,b\}$, and letting $E = E_0\cup (a, b)\cup S$, where $S$ is any set of edges of the form $(v, a)$ or $(v, b)$, where  $v\in V_0$; specify the edge weight of $\omega(a,b)=\pm 1/2$. 
	Construct a graph $G' = (V', E')$ by taking $V' = V_0 \cup \{a,x,b\} $, and letting $E' = E_0\cup (a,x) \cup (b,x) \cup S$, where $S$ is as in $G$; specify the edge weights $\omega(a,x)=\omega(b,x)=\pm 1$, in agreement with the sign of $\omega(a,b)$. Then
	\[
		|2\tau(G)| = |\tau(G')|.
	\]
	\end{lemma}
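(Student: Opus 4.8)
The plan is to relate spanning trees of $G'$ directly to those of $G$ by analyzing how any spanning tree of $G'$ must interact with the subdivision vertex $x$. In $G'$, the vertex $x$ has degree exactly two, incident only to the edges $(a,x)$ and $(b,x)$, each of weight $\pm 1$. Any spanning tree $T'\subseteq G'$ must contain at least one of these two edges (to connect $x$), so there are two cases: either $T'$ contains both $(a,x)$ and $(b,x)$, or it contains exactly one of them. First I would handle the ``both'' case: if $T'$ contains both edges at $x$, then $T'-\{(a,x),(b,x)\}$ together with the edge $(a,b)$ is a spanning tree of $G$, because contracting the path $a-x-b$ to the edge $(a,b)$ is a bijection here. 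Since $\omega(a,x)\omega(b,x)=1$ (the two signs agree) while $\omega(a,b)=\pm 1/2$, the weight of $T'$ in this case is $\tfrac{1}{\omega(a,b)}$ times... — more carefully, $\omega(T') = \omega(a,x)\omega(b,x)\cdot\omega(T'\setminus\{(a,x),(b,x)\}) = (\pm1)\cdot\omega(T'\setminus\{(a,x),(b,x)\})$, and the corresponding tree $T = (T'\setminus\{(a,x),(b,x)\})\cup(a,b)$ of $G$ has $\omega(T) = \omega(a,b)\cdot\omega(T'\setminus\{(a,x),(b,x)\}) = \pm\tfrac{1}{2}\omega(T'\setminus\{(a,x),(b,x)\})$, whence $\omega(T') = \pm 2\,\omega(T)$ with a uniform sign. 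So these trees of $G'$ contribute, up to an overall sign, $2\tau_{\mathrm{both}}$, where $\tau_{\mathrm{both}}$ is the tree weight of the corresponding subfamily of $G$-trees containing $(a,b)$.

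Next I would handle the case where $T'$ contains exactly one of $(a,x),(b,x)$, say $(a,x)$ but not $(b,x)$. Then $x$ is a leaf attached to $a$, and $T'\setminus\{(a,x)\}$ restricts to a spanning tree of $G-x$... but $G-x$ is not $G$; rather, deleting $x$ from $G'$ gives the graph on $V=V_0\cup\{a,b\}$ with edges $E_0\cup S$ and \emph{no} edge between $a$ and $b$ — call it $G^-$. So trees in this case biject with spanning trees of $G^-$ that happen to connect $a$ and $b$ through $V_0$ (which every spanning tree of $G^-$ does), and the leaf edge $(a,x)$ contributes a factor $\pm 1$. Symmetrically for the ``$(b,x)$ only'' case. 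Thus the single-edge cases contribute, up to sign, $2\tau(G^-)$ (a factor of $2$ from the two symmetric choices of leaf edge, each with weight of absolute value $1$). Meanwhile, in $G$ the spanning trees that do \emph{not} contain $(a,b)$ are exactly the spanning trees of $G^-$, so $\tau(G) = \tau_{\mathrm{both-analog}} + \tau(G^-)$ where $\tau_{\mathrm{both-analog}} = \sum_{T\ni(a,b)}\omega(T)$.

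Assembling: $\tau(G') = (\pm 2)\tau_{\mathrm{both-analog}} + (\pm 2)\tau(G^-)$, and I would need to check that the two $\pm$ signs coincide — they should, since both are governed by the common sign of $\omega(a,x)=\omega(b,x)$ relative to that of $\omega(a,b)$ — so that $\tau(G') = \pm 2\big(\tau_{\mathrm{both-analog}}+\tau(G^-)\big) = \pm 2\tau(G)$, giving $|\tau(G')| = |2\tau(G)|$. The main obstacle I anticipate is bookkeeping the signs carefully in the ``exactly one edge at $x$'' case to confirm the two symmetric subfamilies contribute with the \emph{same} sign (rather than cancelling), and making the contraction/deletion bijections fully rigorous in the presence of multi-edges in $S$; a clean alternative that sidesteps some of this is to invoke Lemma \ref{multiedge} to replace the weight-$(\pm1/2)$ edge $(a,b)$ in $G$ by two parallel weight-$(\pm1/4)$ edges and then recognize the resulting configuration as a deletion–contraction instance, but the direct case analysis above is likely the most transparent route.
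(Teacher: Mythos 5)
Your proposal is correct and is essentially the paper's own argument (following Ciucu--Yan--Zhang): partition the spanning trees of $G'$ according to which of the edges $(a,x),(b,x)$ they contain, biject the ``both'' case with trees of $G$ containing $(a,b)$ via contraction of the path through $x$, and biject each ``exactly one'' case with trees of $G$ avoiding $(a,b)$, tracking the weights. The sign check you flag does go through: in the ``both'' case the sign is that of $\omega(a,b)$ (the two $\pm1$ edges multiply to $+1$), in the ``one'' case it is the common sign of $\omega(a,x)=\omega(b,x)$, and these agree by hypothesis, giving $\tau(G')=\pm2\tau(G)$ uniformly.
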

	
	\begin{proof}
	As in the proof of  \cite[Lemma 6]{CYZ}, we may partition the spanning trees of $G$ into two sets $C_1\cup C_2$, where spanning trees in $C_1$ contain edge $(a, b)$ and spanning trees in $C_2$ do not contain edge $(a, b)$. Likewise, partition the spanning trees of $G'$ into three sets
$C1' \cup C2' \cup C3'$, where spanning trees in $C_1'$ contain both $(a, x), (b, x)$, where trees in $C_2'$ contain $(a, x)$ but not $(b, x)$, and trees in $C_3'$ contain $(b, x)$ but not $(a, x)$.

	There exists a bijection $f:C_1'\rightarrow C_1$ obtained by deleting the vertex $x$ and adding the edge $(a,b)$. 
	If the weights of the edges $(a,b)\in G$ and $(a,x), (b,x)$ in $G'$ are all positive, then the bijection satisfies $\omega(f(T')) = \frac{1}{2}\omega(T')$ for all spanning trees $T'$ in $C_1'$, whereas if the edge weights are all negative, then $\omega(f(T')) = -\frac{1}{2}\omega(T')$. 
	There are also bijections $g:C_2'\rightarrow C_2$ and $h:C_3' \rightarrow C_2$, obtained by contracting the edge $(a, x)$ or $(b, x)$, respectively. 
	In this case, when the edge weights are all positive, then $\omega(g(T')) = \omega(T')$ and $\omega(h(T')) = \omega(T')$, whereas if  the edge weights are all negative, then $\omega(g(T')) = \omega(T')$ and $\omega(h(T')) = \omega(T')$.  

Finally, observe that
\[
	\tau(G') = \sum_{T'\in C_1'} \omega(T') + \sum_{T'\in C_2'}\omega(T') + \sum_{T'\in C_3'}\omega(T')  =\pm2  \sum_{T\in C_1}\omega(T) \pm 2\sum_{T\in C_2}\omega(T) =\pm 2\tau(G),
\]	
where the sign in front of the summation is positive/negative when the edge weights $(a,b)\in G$ and $(a,x), (b,x)\in G'$ are all positive/negative, respectively.
	\end{proof}

\begin{remark}
\label{selfloop}
Non-simple graphs containing self-loops or multiedges may result from Tait graphs of knot diagrams. 
For edges that are self-loops, $\tau(G) = \tau(G-e)$. 
\end{remark}
	
\subsection{Spanning trees of graphs with involutive symmetry}

We will now show how the relationship between the determinants of the constituent knots and theta curve is described by counting spanning trees of graphs with involutive symmetry. Here, $G = (V(G), E(G))$ is the weighted Tait graph of $(K, h)$, with symmetry from the involution $h$. 
The following algorithms constructing the graphs $G_L$ and $G_R$ are due to Zhang-Yan \cite[Theorem 2.1]{ZY}, generalizing unweighted versions due to Ciucu-Yan-Zhang in \cite[Theorem 4]{CYZ}. 
The involution $h$ partitions $V(G)$ into three sets: $V_L\cup V_C\cup V_R$, where $V_L =\{ v_1, \cdots, v_n\}$ consists of vertices on the left side of the axis of involution, $V_R =\{ v_1', \cdots, v_n'\}$ are vertices on the right, and $V_C =\{ w_1, \cdots, w_m\}$ are vertices lying on the axis.

\begin{definition}($G_R$ and $G_L$ \cite{ZY}.)
\label{defn}
Two weighted graphs $G_L$ and $G_R$ are obtained from $G$ as follows. To form $G_R$:
\begin{enumerate}[label=(\roman*)]
	\item Take the subgraph of $G$ induced by $V_R\cup V_C$.
	\item For every edge $e=(w_i, w_j)$ along the axis of involution, reduce the weight by half.

\end{enumerate}
To form $G_L$:
\begin{enumerate}[label=(\roman*)]
	\item Take the subgraph of $G$ induced by $V_L$ together with a new vertex $u$. 
	\item For each edge $e=(v_i, v_i')$ with weight $\omega(e)$, add an edge $(u,v_i)$ with weight $2\omega(e)$. 
	\item For each edge $e=(v_i,w_j)$ add an edge $(u,v_i)$ with weight $\omega(e)$.
	\end{enumerate}
\end{definition}	
	
With these defined,	
\begin{theorem}(Zhang-Yan \cite[Theorem 2.1]{ZY})
\label{zy theorem}
Suppose that $G = (V(G), E(G))$ is a weighted graph with an involution $h$ and that $G_L, G_R$, and $V_C$ are defined as above. Then the tree weight of $G$ is given by
\[	
	\tau(G) = 2^{m-1}\tau(G_L )\tau(G_R)
\]
where $m$ is the number of vertices of $V_C$.
\end{theorem}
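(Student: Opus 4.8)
The plan is to prove Theorem~\ref{zy theorem} by the Matrix Tree Theorem (Theorem~\ref{thm:Kirchhoff}) together with a ``butterfly'' block diagonalization of the Laplacian $\widetilde L_G$ adapted to the involution $h$; this is the linear-algebra argument of Ciucu--Yan--Zhang \cite{CYZ} and Zhang--Yan \cite{ZY}, and the only point to check is that edge weights in $\{-1,+1\}$ cause no trouble. First I would order the vertices as $V_L,V_R,V_C$ and write $\widetilde L_G$ in $3\times 3$ block form. Because $h$ is a graph automorphism that sends $v_i\mapsto v_i'$ and fixes each $w_j$, the induced subgraph on $V_L$ is carried isomorphically to that on $V_R$, the blocks coupling $V_L$ and $V_R$ to $V_C$ coincide, and---as in Definition~\ref{defn}---the only edges joining $V_L$ to $V_R$ connect mirror pairs $(v_i,v_i')$. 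Hence
\[
\widetilde L_G=\begin{pmatrix} P & Q & R\\ Q & P & R\\ R^{T} & R^{T} & S\end{pmatrix},
\]
where $P$ and $S$ are symmetric, $Q$ is the diagonal matrix whose $i$th entry is minus the total weight of the edges $(v_i,v_i')$, and the diagonal entries of $P$ and $S$ record the full weighted degrees in $G$.

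Next I would conjugate by the symmetric block matrix $\widetilde W=\bigl(\begin{smallmatrix} I_n & I_n\\ I_n & -I_n\end{smallmatrix}\bigr)\oplus I_m$, for which $\det\widetilde W=(-2)^n$. A direct computation gives
\[
\widetilde W\,\widetilde L_G\,\widetilde W=\begin{pmatrix} 2(P+Q) & 0 & 2R\\ 0 & 2(P-Q) & 0\\ 2R^{T} & 0 & S\end{pmatrix},
\]
which, read in the order (symmetric combinations of $V_L$ and $V_R$; then $V_C$; then antisymmetric combinations), is the direct sum of $A:=\bigl(\begin{smallmatrix} 2(P+Q) & 2R\\ 2R^{T} & S\end{smallmatrix}\bigr)$ and $B:=2(P-Q)$. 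The crux of the proof is to identify these summands: one verifies that $A=2\widetilde L_{G_R}$ (after identifying the symmetric coordinates with $V_R$), where the halving of the $V_C$--$V_C$ weights in Definition~\ref{defn}(ii) is exactly what turns the $S$-block into $2\cdot\tfrac12 S$; and that $B=2L_{G_L}$, where $L_{G_L}$ is the reduced Laplacian of $G_L$ obtained by deleting the row and column of the new vertex $u$. For the latter, one checks that the off-diagonal of $P-Q$ reproduces the induced subgraph on $V_L$, and that its diagonal records the weighted degree of $u$ in $G_L$, which by Definition~\ref{defn}(ii)--(iii) absorbs twice each cross-axis weight and once each $V_L$--$V_C$ weight.

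Finally I would read off tree weights by comparing cofactors, using that the cofactors of the Laplacian of a connected graph all coincide; write $\tau(G)$, $\tau(G_R)$ for these common cofactor values and recall $L_{G_L}$ is itself a reduced Laplacian of $G_L$, so $\det L_{G_L}=\tau(G_L)$. Assume first $m\ge 1$ and delete from $\widetilde L_G$ the row and column of some axis vertex $w\in V_C$. Since $\widetilde W$ is the identity on $V_C$, this deletion commutes with the conjugation; writing $\widetilde W'$ for $\widetilde W$ with that row and column removed, $\det(\widetilde W')^2=4^{n}$ and
\[
4^{n}\tau(G)=\det\!\bigl(\widetilde W'\,\widetilde L_G^{(w)}\,\widetilde W'\bigr)=\det\!\bigl((2\widetilde L_{G_R})^{(w)}\bigr)\cdot\det\!\bigl(2L_{G_L}\bigr)=2^{\,n+m-1}\tau(G_R)\cdot 2^{\,n}\tau(G_L),
\]
by the Matrix Tree Theorem applied to $G_R$ (note $w\in V(G_R)$) and to $G_L$. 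Dividing by $4^{n}=2^{2n}$ yields $\tau(G)=2^{m-1}\tau(G_L)\tau(G_R)$. (If $m=0$ one argues identically with the adjugate of $\widetilde W\widetilde L_G\widetilde W$ in place of a single cofactor; and if $\tau(G)=0$ the block decomposition of $\widetilde W\widetilde L_G\widetilde W$ forces one of $\tau(G_L),\tau(G_R)$ to vanish too.) The main obstacle is the bookkeeping in the middle paragraph: matching the diagonal ``degree'' entries of $A$ and $B$ against the weighted degrees in $G_R$ and $G_L$, and tracking the powers of $2$ contributed by the conjugation, by the edge-doublings in the construction of $G_L$, and by the weight-halvings in the construction of $G_R$, so that they combine into exactly $2^{m-1}$ with the right sign.
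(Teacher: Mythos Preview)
The paper does not supply its own proof of Theorem~\ref{zy theorem}; it is simply quoted from Zhang--Yan \cite[Theorem 2.1]{ZY} (generalizing \cite{CYZ}) and then invoked in the proof of Theorem~\ref{main}. So there is nothing in the paper to compare your proposal against directly.

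That said, your proposal is precisely the block-diagonalization argument of \cite{CYZ, ZY}, and the computation is carried out correctly: the butterfly conjugation splits $\widetilde L_G$ into $A\oplus B$, the identifications $A=2\widetilde L_{G_R}$ and $B=2L_{G_L}$ are exactly right (your diagonal bookkeeping checks out, though the phrase ``records the weighted degree of $u$'' should read ``records the weighted degree of $v_i$ in $G_L$, including the contribution from edges to $u$''), and the power-of-$2$ accounting in the final display is accurate. One small caveat: you are implicitly using the signed weighted Matrix Tree Theorem, $\det L=\tau(G)$, rather than the version with absolute values as stated in Theorem~\ref{thm:Kirchhoff}; that signed form is indeed the correct one for arbitrary edge weights and is what \cite{ZY} use, so this is only a mismatch with the paper's phrasing, not a gap in your argument.
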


\begin{remark}
Because $(K, h)$ is strongly invertible, the edge set $E(G)$ of the Tait graph $G$ contains edges of the form $e=(v_i, v_j')$ only if $i=j$. Thus we have omitted items ($3$) and ($2'$) from the definition appearing in \cite{ZY}. 
\end{remark}

\section{Proof of Theorem \ref{main}}

Assume that $K_{ac}=e_a\cup e_c$ is unknotted, and call $K_{ab} = e_a\cup e_b$ and $K_{bc}=e_b\cup e_c$.

As described in Section \ref{strongly invertible}, $K_{ac}$ can be viewed as the fixed set $\fix(h)$ of the involution for some strongly invertible knot $(K, h) = \tilde{e}_b$. 
We may assume that any diagram of $K$ is symmetric with respect to $h$ and view $K_{ac}$ as a vertical axis $\alpha$ with the point at infinity. 
By definition, $(K, h)$ intersects $\alpha$ in exactly two points. 
This partitions $\alpha \cup \{\infty\} =e_a \cup e_c$ into `two rooms' along which the diagram admits
a uniform checkerboard coloring pattern from wall to wall, as in Figure \ref{clasp shading}. 
More precisely, symmetry implies that given any edge $e\in E(G)$ or $e\in E(G^\perp)$, either $\alpha$ intersects $e$ in exactly one point, $\alpha$ and $e$ are disjoint, or  $\alpha$ intersects $e$ in $e$. 
Recall that there are two choices of a checkerboard shading $\varphi$ of diagram of $(K, h)$. 
For exactly one choice of shading $\varphi$, the following holds for all edges:
\begin{eqnarray}
\label{parallel perpendicular}
\begin{aligned}
&\text{In } e_a: \quad e \cap e_a = e  \text{ and } e^\perp \cap e_a = 1\text{ point},\\
&\text{In } e_c: \quad e \cap e_c = 1 \text{ point and } e^\perp \cap e_c = e^\perp. 
\end{aligned}
\end{eqnarray}
The other choice in shading will yield an equivalent statement interchanging $e$ and $e^\perp$. As a consequence of \eqref{parallel perpendicular}, we have:

\begin{figure}	
	\includegraphics[width=3.4in]{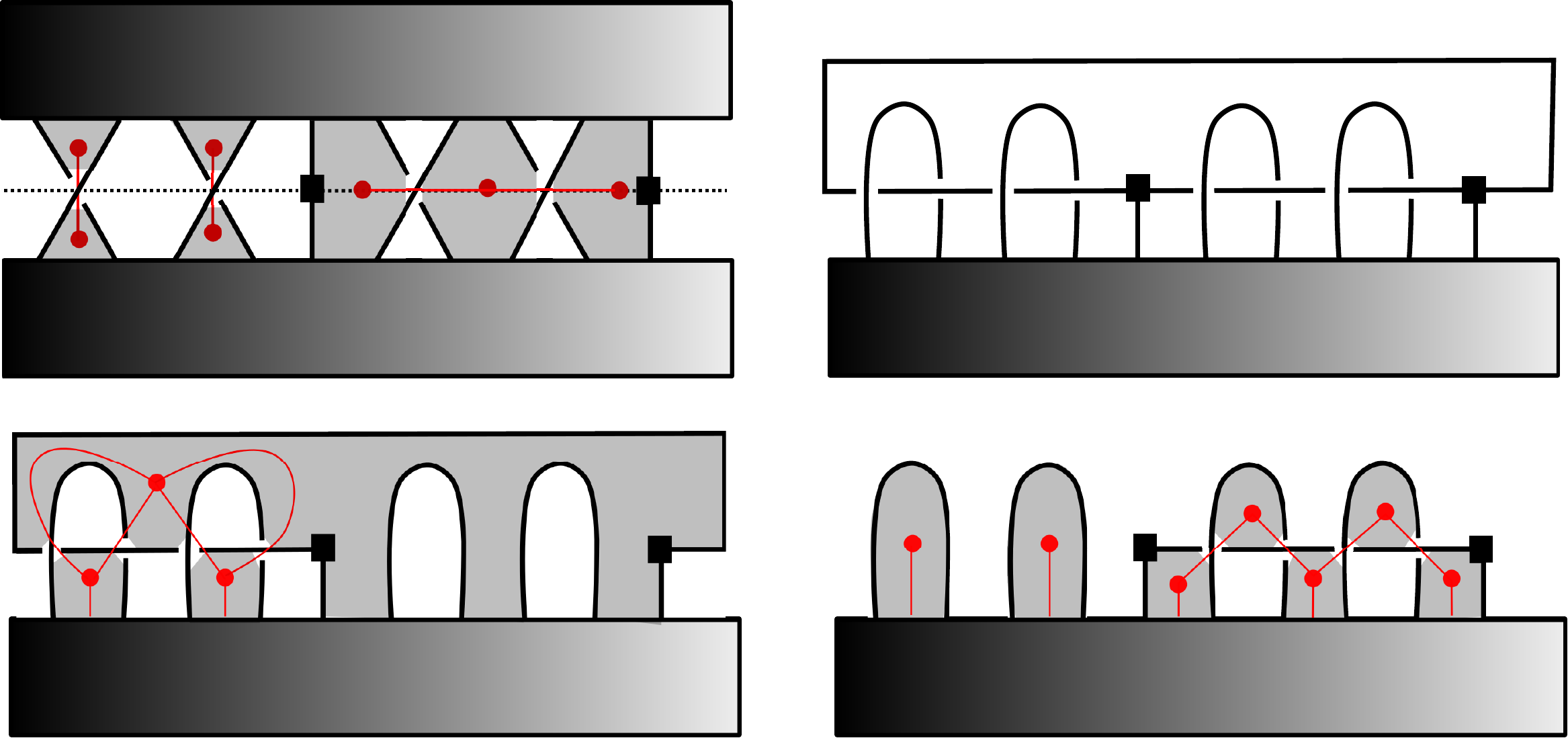}
	\caption{(Top left) Checkerboard shading on $(K, h)$ showing `two rooms' of shading patterns along the axis. (Top right) $\vartheta = K/h \cup e_a\cup e_c$. (Bottom) The shading of $(K, h)$ induces a shading on the constituent knots $K_{bc}$ (left) and $K_{ab}$ (right) of $\vartheta$. Parts of the Tait graphs are indicated in red.} 
	\label{clasp shading}
\end{figure}

\begin{lemma}
\label{induce shading}
A checkerboard shading of $(K, h)$ in $S^3$ induces a checkerboard shading on the constituent knots $K_{ab}$ and $K_{bc}$ in the quotient diagram of $\vartheta = K/h \cup \alpha$ in $S^3$. 
\end{lemma}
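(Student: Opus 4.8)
The plan is to show that the checkerboard shading $\varphi$ on the symmetric diagram of $(K,h)$ descends, under the quotient by $h$, to a legitimate checkerboard shading of the quotient diagram of $\vartheta$, and then to restrict that shading to the two constituent diagrams $K_{ab}$ and $K_{bc}$ inside $\vartheta$. Recall that a shading of a link diagram is legitimate precisely when adjacent regions (regions sharing an edge of the underlying $4$-valent graph) receive opposite colors; equivalently, every crossing sees the standard black/white pattern around it. So the whole proof reduces to checking this local adjacency condition after passing to the quotient.

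First I would set up the quotient carefully. Since the diagram of $K$ is symmetric with respect to $h$, the axis $\alpha$ (a vertical line together with the point at infinity) meets $K$ in the two vertices and otherwise runs through regions and across edges as catalogued in the displayed dichotomy preceding the lemma: away from the two vertices, $\alpha$ either crosses a region, crosses an edge transversally in one point, or runs along an edge. The involution $h$ acts freely on the complement of $\alpha$, and the quotient of the diagram is obtained by folding the plane along $\alpha$. Under this fold, a region $X$ of $D_K$ not meeting $\alpha$ is identified with its mirror $h(X)$, giving a single region of the quotient; a region meeting $\alpha$ in an arc maps to ``half" of itself. I would observe that by \eqref{parallel perpendicular}, for the chosen shading $\varphi$ the ``rooms" along $\alpha$ are monochromatic on each side in the appropriate sense (this is exactly the ``wall to wall" uniformity asserted in the paragraph before the lemma), so that $\varphi$ is $h$-invariant: $\varphi(h(X)) = \varphi(X)$ for every region $X$. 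This invariance is what makes the descent well-defined — the color of a quotient region is unambiguously the common color of its two preimages (or of its single self-symmetric preimage).

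Next I would verify the checkerboard condition downstairs. Take two regions $\bar X, \bar Y$ of the quotient diagram of $\vartheta$ that share a boundary arc $\bar e$. There are two cases: either $\bar e$ is the image of an edge $e$ of $D_K$ disjoint from $\alpha$ (or meeting $\alpha$ transversally in one point), in which case $\bar X,\bar Y$ lift to regions $X,Y$ of $D_K$ sharing the edge $e$, so $\varphi(X)\ne\varphi(Y)$ by hypothesis and hence $\varphi(\bar X)\ne \varphi(\bar Y)$; or $\bar e$ is a ``new" arc of $\vartheta$, namely (part of) the axis $\alpha = e_a\cup e_c$, which happens exactly when the corresponding edge $e$ of $D_K$ satisfies $e\cap\alpha = e$ (an edge running along the axis). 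In that second situation I would use \eqref{parallel perpendicular} directly: along $e_a$ the Tait-type incidences force the regions on the two sides of $\alpha$ to be oppositely colored, and symmetrically along $e_c$; this is precisely the content of the ``two rooms'' picture in Figure \ref{clasp shading}. Either way the two quotient regions across $\bar e$ have opposite colors, so $\varphi$ descends to a genuine checkerboard shading of $\vartheta$. Finally, restricting this shading of $\vartheta$ to the subdiagram consisting of the two edges $e_a\cup e_b$ (respectively $e_b\cup e_c$) gives a shading of the constituent knot diagram $K_{ab}$ (respectively $K_{bc}$): erasing the third edge only merges some regions, and merging monochromatic-across-nothing regions cannot create an adjacency of equal colors, so the restricted assignment is still a valid checkerboard coloring of each constituent.

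The main obstacle I anticipate is the bookkeeping in the ``edge along the axis" case — making rigorous the claim that the shading pattern is ``uniform from wall to wall" so that crossing $\alpha$ along such an edge flips the color. This is really a statement that the only way a color can change as one moves perpendicular to $\alpha$ within a single room is by crossing an edge of $K$, and edges of $K$ do not lie along $\alpha$ in the interior of a room (the only on-axis features are the two vertices where $K$ meets $\alpha$); combined with \eqref{parallel perpendicular} this pins down the color on each side of $\alpha$. I would handle this by arguing room-by-room: fix one of the two rooms, note that $K$ enters it only through its two ``walls'' (the points where $K$ crosses $\alpha$), track the shading along a horizontal transversal, and conclude the color immediately left of $\alpha$ and immediately right of $\alpha$ differ exactly when an on-axis edge is present. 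Everything else is a routine unwinding of the definition of checkerboard coloring together with the local picture in Figure \ref{incidence clasp} and Figure \ref{clasp shading}.
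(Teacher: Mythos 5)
Your argument routes everything through an intermediate object that does not exist: a genuine checkerboard shading of the full quotient diagram of $\vartheta$. At each trivalent vertex of a theta-curve diagram the three local regions separated by $e_a$, $e_b$, $e_c$ are pairwise adjacent, so the dual graph contains a triangle and the regions of $\vartheta$'s diagram admit no proper two-coloring at all; your adjacency check treats arcs but never the vertices, which is exactly where it fails. Your verification along the axis is also not what \eqref{parallel perpendicular} says: that dichotomy concerns edges of the Tait graphs $G$ and $G^\perp$, not the colors of the regions flanking $\alpha$, and with the paper's choice of shading the regions on the two sides of one of the axis arcs carry the same color. Moreover, your model of the quotient as ``folding the plane along $\alpha$'' misses the geometric feature the paper actually uses: a crossing of $(K,h)$ on the axis descends to a clasp, i.e.\ two crossings between $K/h$ and the axis, which creates new regions (the clasp bigons and the region on the far side of the axis) that are not images of single regions upstairs; your region bookkeeping and two-case analysis never see these crossings or regions.

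The final restriction step is also internally inconsistent with the step before it: deleting an arc of a diagram merges the two regions adjacent across it, and if your coloring really were proper across $e_c$, those two regions would have opposite colors, so erasing $e_c$ to form $K_{ab}$ could not ``only merge monochromatic regions'' --- properness across the axis is precisely what makes restriction impossible. Indeed the shadings needed on $K_{ab}$ and $K_{bc}$ are not restrictions of any single coloring of $\vartheta$'s diagram: regions abutting $e_a$ from the $e_b$-side are shaded in the induced shading of $K_{ab}$ but unshaded in that of $K_{bc}$ (one can check this already for a theta curve with a single clasp). The paper avoids all of this by never colouring the theta diagram: it treats the two constituent knot diagrams separately and describes directly how the chosen shading of the symmetric diagram of $(K,h)$ descends --- away from the axis via the two-to-one correspondence of regions, with on-axis crossings along $e_a$ becoming shaded clasps in $K_{ab}$ and shaded fingers in $K_{bc}$, and symmetrically for $e_c$. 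To repair your proof you would need to abandon the global coloring of $\vartheta$ and instead verify the checkerboard condition crossing-by-crossing in each constituent diagram, including at the clasps and fingers arising from the on-axis crossings, since that explicit description is what Lemma \ref{factors} later relies on.
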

\begin{proof}
Choose a checkerboard shading in a symmetric diagram of $(K, h)$. Quotient via the involution $h$ to obtain a theta curve $\vartheta = K/h \cup e_a \cup e_c$. In $K_{ab} = K/h \cup e_a$, the checkerboard shading at the crossings of $(K, h)$ along $e_a$ descend in the quotient to checkerboard shaded clasps, and the shaded crossings along $e_c$ descend to shaded `fingers.' See Figure \ref{clasp shading}. Similarly in $K_{bc}=K/h\cup e_c$,  the checkerboard shading at crossings along $e_c$ descends to shaded clasps, and the shaded crossings along $e_a$ descend to shaded fingers. Away from the axis, the checkerboard shadings in the diagram of $K_{ab}$ and $K_{bc}$ agree with that of $(K, h)$. 
\end{proof}

Let $G, G_{ab}, G_{bc}$ denote the Tait graphs for $(K,h), K_{ab}, K_{bc}$, respectively.

\begin{lemma} 
\label{factors}
For one choice of checkerboard shading of $(K, h)$, we have
$2^{m-1}\tau(G_R) = \tau(G_{ab})$ 
and $\tau(G_L) = \tau(G_{bc})$. With other choice, 
$2^{m-1}\tau(G^\perp_R) = \tau(G\perp_{bc})$ and  
$\tau(G^\perp_L) = \tau(G\perp_{ab})$. 
\end{lemma}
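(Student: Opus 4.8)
The plan is to compare the graphs $G_R$ and $G_L$ produced by Definition \ref{defn} from the symmetric Tait graph $G$ of $(K,h)$ with the Tait graphs $G_{ab}$ and $G_{bc}$ obtained from the quotient constituent diagrams via Lemma \ref{induce shading}, and then to apply Lemmas \ref{multiedge} and \ref{subdivide} to bridge the numerical gap between them. Fix the checkerboard shading $\varphi$ of $(K,h)$ for which the incidence pattern \eqref{parallel perpendicular} holds. First I would analyze $G_R$: its vertex set is $V_R \cup V_C$, where $V_C$ consists of the shaded regions whose closure meets the axis $\alpha$. Under the quotient map $K \mapsto K/h$, the half of the diagram on the right of $\alpha$ maps homeomorphically onto the quotient, and by \eqref{parallel perpendicular} a crossing of $(K,h)$ lying on $e_a$ descends to a clasp in $K_{ab}$ while the shaded region straddling the axis descends to a single shaded region bordered by the clasp. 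Thus the shaded regions of $K_{ab}$ away from the axis correspond bijectively with $V_R$, the on-axis shaded regions correspond with $V_C$, and each axis edge $e=(w_i,w_j)$ of $G$, whose weight was halved in step (ii) of forming $G_R$, corresponds to an edge of $G_{ab}$. The only discrepancy is bookkeeping of weights: a clasp in $K_{ab}$ contributes a pair of parallel edges in $G_{ab}$ (one per crossing of the clasp) of weight $\xi$, whereas in $G$ the two crossings of that clasp's preimage, being mirror images across $\alpha$, contribute to $G$ in a way that after the halving of step (ii) matches a single weight-$\xi$ edge or its half; here Lemma \ref{multiedge} and Remark \ref{selfloop} let me replace parallel/self-loop edges coherently, and the power $2^{m-1}$ absorbs the $m$ halved axis edges up to one. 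Carefully matching these contributions gives $2^{m-1}\tau(G_R) = \tau(G_{ab})$.

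For the factor $\tau(G_L) = \tau(G_{bc})$, I would examine the construction of $G_L$: one keeps the subgraph induced by $V_L$ together with a cone vertex $u$, adding an edge $(u,v_i)$ of weight $\omega(e)$ for each edge $e=(v_i,w_j)$ from a left vertex to an on-axis vertex, and an edge $(u,v_i)$ of weight $2\omega(e)$ for each symmetric pair $e=(v_i,v_i')$ (the latter case being rare here, as noted in the Remark following Theorem \ref{zy theorem}, since $(K,h)$ strongly invertible forces $i=j$). The claim is that $u$ plays the role of the shaded region on the $e_c$ side: by \eqref{parallel perpendicular}, in $K_{bc}=K/h\cup e_c$ the crossings along $e_c$ descend to clasps while those along $e_a$ descend to fingers, and all the shaded regions that touched $\alpha$ get identified in the quotient into one region, namely $u$. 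The edges from $V_L$ to $V_C$ in $G$ become edges from $V_L$ to $u$ in $G_{bc}$, matching step (iii) of the $G_L$ construction, and the weight-$2\omega(e)$ edges of step (ii) correspond to the clasps along $e_c$, where Lemma \ref{subdivide} (with the roles of the subdivided edge and the two-edge path reversed, matching the $|2\tau(G)|=|\tau(G')|$ identity) accounts for the factor of two. Thus $\tau(G_L)=\tau(G_{bc})$. The statement for the other shading follows verbatim after interchanging the roles of $e$ and $e^\perp$ as recorded after \eqref{parallel perpendicular}, which swaps $G\leftrightarrow G^\perp$, $K_{ab}\leftrightarrow K_{bc}$, and hence $G_R\leftrightarrow G_L$.

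I would carry this out by first drawing the local picture at a single clasp and at a single finger (as in Figure \ref{clasp shading}) to pin down exactly which edges, weights, and subdivisions appear on each side, then assembling the global bijection region-by-region and crossing-by-crossing, and finally collecting the powers of $2$. The main obstacle I anticipate is the weight and multiplicity bookkeeping at the clasps: a clasp is a two-crossing tangle, so it contributes two parallel edges to a Tait graph, and I must verify that after the halving operations in Definition \ref{defn} these parallel edges and the halved axis edges reassemble — via Lemma \ref{multiedge}, Lemma \ref{subdivide}, and Remark \ref{selfloop} — into precisely the edges of $G_{ab}$ and $G_{bc}$ with the correct signs, with the surplus factors of two combining to exactly $2^{m-1}$ and no more. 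Getting the sign conventions of Figure \ref{incidence clasp} to propagate consistently through the quotient (a right-handed crossing descending to a right-handed clasp) is the delicate point that makes the unsigned argument of \cite{CYZ} insufficient and forces the use of the signed Lemmas \ref{subdivide} and \ref{dual tree weight}.
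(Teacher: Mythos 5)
Your overall strategy matches the paper's: compare $G_R$ with $G_{ab}$ and $G_L$ with $G_{bc}$ edge by edge, bridge the discrepancies with the weighted-graph lemmas, and dualize for the other shading. However, your local identifications at the clasps are swapped, and this breaks exactly the bookkeeping the statement requires. With the shading fixed by \eqref{parallel perpendicular}, a crossing of $(K,h)$ on $e_a$ is a \emph{single} on-axis crossing (not a mirror pair --- mirror pairs are the off-axis crossings), giving one edge $(w_i,w_j)$ of $G$; in the quotient its clasp in $K_{ab}$ has a \emph{shaded} bigon, so it contributes to $G_{ab}$ a subdivided edge, i.e.\ a series path $(w_i,x)\cup(x,w_j)$ of weight-$\pm1$ edges through a new vertex, not a parallel pair. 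It is Lemma \ref{subdivide} (not Lemma \ref{multiedge} or Remark \ref{selfloop}) that compares this with the halved axis edge of $G_R$, giving a factor of exactly $2$ per axis edge; since the on-axis vertices form a path there are exactly $m-1$ such edges, which is where $2^{m-1}$ comes from --- not ``$m$ halved edges up to one.'' If the clasp really gave a parallel pair as you assert, Lemma \ref{multiedge} would convert it into a single edge of weight $\pm2$, differing from the weight-$\pm\tfrac12$ edge of $G_R$ by a factor of $4$ on that edge alone, which is not a uniform factor on tree weights, so no identity of the form $2^{m-1}\tau(G_R)=\tau(G_{ab})$ would follow from that picture.

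Dually, a crossing on $e_c$ corresponds to an edge $(v_i,v_i')$ of $G$, and its clasp in $K_{bc}$ has an \emph{unshaded} bigon: it contributes a parallel pair of weight-$\pm1$ edges $(u,v_i)$ to $G_{bc}$, which Lemma \ref{multiedge} identifies with the single weight-$\pm2\omega(e)$ edge of $G_L$ with no factor of $2$; that is what yields $\tau(G_L)=\tau(G_{bc})$. Your appeal to Lemma \ref{subdivide} ``with roles reversed'' cannot work here: that lemma needs weights $\pm\tfrac12$ versus $\pm1$ (so it does not apply to weight-$\pm2$ edges) and always produces a factor of $2$, which would contradict the claimed equality, one spurious factor per clasp on $e_c$. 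In short, the two lemmas must be applied in exactly the opposite way to your assignment: subdivision (Lemma \ref{subdivide}) for $G_{ab}$ along $e_a$, parallel doubling (Lemma \ref{multiedge}) for $G_{bc}$ along $e_c$. Once those series/parallel structures are corrected, and the count of on-axis edges is pinned to $m-1$, your outline coincides with the paper's proof, including the duality step for the other choice of shading.
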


In the second case, Lemma \ref{dual tree weight} implies $|\tau(G)| = |\tau(G^\perp)|$. 

\begin{proof}[Proof of Lemma \ref{factors}]
Fix a symmetric diagram of $(K, h)$ and by convention, let $e_c$ be the unbounded arc of the axis. 
The Tait graph $G$ of the diagram is symmetric, planar and connected.
The edge weights $\omega(e)\in\{-1, +1\}$ for $e\in G$ are induced from the incidence numbers $\xi(c)$ at the crossings, where the signs of the $\xi(c)$ depend on the choice in the checkerboard shading of the diagram. 
Specify the shading $\varphi$ of $(K, h)$ 
so that \eqref{parallel perpendicular} holds. 
With this choice, the unbounded region of the diagram is unshaded. 
By Lemma \ref{induce shading}, $\varphi$ induces a shading on the constituent knot $K_{ab} = e_b\cup e_a$ corresponding with Tait graph $G_{ab}$.  
Under the action of the involution, edges in $G$ that are disjoint from $\alpha$ map bijectively to edges in $G_{ab}$. Edges intersecting $\alpha$ in a point do not map to edges in $G_{ab}$, and edges that lie along $\alpha$ map to a subdivided edge in $G_{ab}$. 
In particular, to form $G_{ab}$: 
\begin{enumerate}[label=(\roman*)]
	\item Take the subgraph of $G$ induced by $V_R\cup V_C$.
	\item For every edge $e=(w_i, w_j)$ along the axis of involution, subdivide $e$ into $(w_i, x)\cup (x,w_j)$ and set $\omega(w_i, x) =\omega (x,w_j) = \frac{1}{2}\omega(e)$. 
\end{enumerate}
This nearly agrees with the definition of $G_R$; Lemma \ref{subdivide} then implies $2^{m-1}\tau(G_R) = \tau(G_{ab})$. 

Consider now $K_{bc}$ with Tait graph $G_{bc}$. 
Under the action of the involution, edges in $G$ that are disjoint from $\alpha$ map bijectively to edges in $G_{bc}$. 
Edges that lie along $\alpha$ do not map to edges in $G_{bc}$. Edges $e = (v_i, v_i')$ in $G$ that intersect $\alpha$ in a point map to a pair of edges in $G_{bc}$. (This pair of edges is dual to a subdivided edge in $G_{bc}^\perp$; see Figure \ref{clasp shading}.) 
In particular, to form $G_{bc}$: 
\begin{enumerate}[label=(\roman*)]
	\item Take the subgraph of $G$ induced by $V_L$ together with a new vertex $u$. 
	\item For each edge $e=(v_i, v_i')$ with weight $\omega(e)$, add a pair of edges edges $e_1=(u,v_i)=e_2$ each with weight $\omega(e)$. 
	\item For each edge $e=(v_i,w_j)$ add an edge $(u,v_i)$ with weight $\omega(e)$.
	\end{enumerate}
This nearly agrees with the definition of $G_L$; the difference is the factor of $2$ in the edge weight in item (ii), which here manifests as a pair of edges. Thus $\tau(G_L) = \tau(G_{bc})$. 

Finally, let us consider the other choice in shading. 
Equation \ref{parallel perpendicular} becomes an equivalent statement with $e$ and $e^\perp$ interchanged. 
Duality preserves connectedness, planarity, symmetry and edge weights. 
The above argument applies, \emph{mutatis mutandis}: interchange $G^\perp$ and $G$, and interchange $K_{ab}$ and $K_{bc}$. 
\end{proof}

We can now prove the main result:
\detprod*

\begin{proof}

By Lemma \ref{dual tree weight}, the determinants of $(K,h), K_{ab}$, and $K_{bc}$ may be calculated by the tree weights of $G, G_{ab}$, and $G_{bc}$, respectively (or equivalently by the tree weights of $G^\perp, G^\perp_{ab}$, and $G^\perp_{bc}$).  Hence, by Theorem \ref{zy theorem}, we have
\[	
	\det(\vartheta)=	 \tau(G) = 2^{m-1}\tau(G_L )\tau(G_R) = \tau(G_{ab}) \tau(G_{bc}) = \det(K_{ab})\det(K_{ac})\det(K_{bc}). \qedhere
\]
\end{proof}


\subsection{Examples}

\begin{figure}	
\begin{tikzpicture}

\node[anchor=south west,inner sep=0] at (0,0) {\includegraphics[width=2.5in]{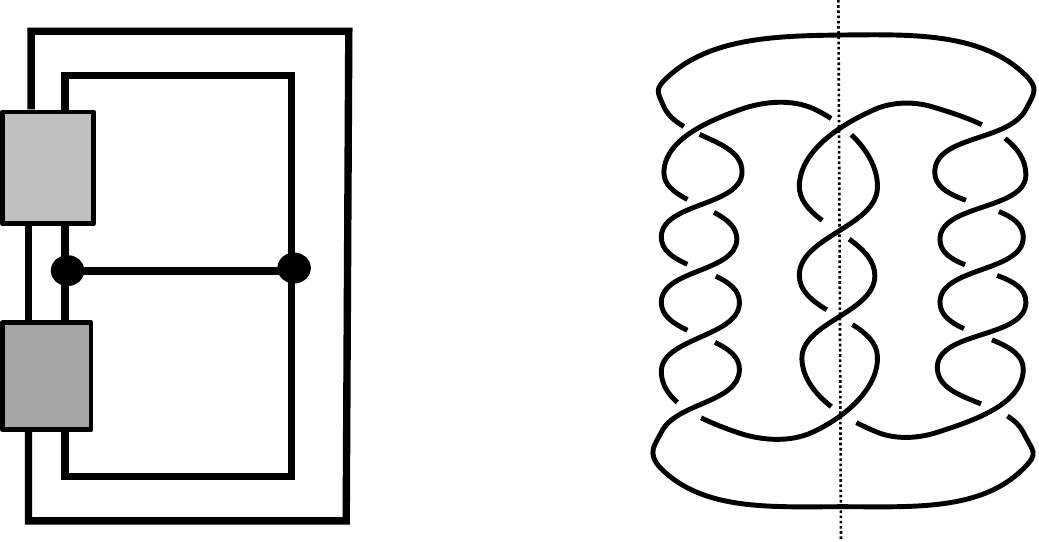}};

\node[label=above right:{$q$}] at (0.5,0.7){};
\node[label=above right:{$p$}] at (0.5,1.9){};
\node[label=above right:{$p$}] at (3.5,1.2){};
\node[label=above right:{$k$}] at (4.4, 1.2){};
\node[label=above right:{$p$}] at (5.3, 1.2){};
\end{tikzpicture}
\caption{The $\vartheta$-curve $\vartheta(p,q)$ with constituent knots $T(p+q, 2)$, $T(p, 2)$, and $U$. The $P(p,q,p)$ pretzel knot is the corresponding strongly invertible knot.}
\label{pretzel}
\end{figure}

\begin{example}
Let $\vartheta(p, q)$, with $p$ odd and $q=2k$ even, be the $\vartheta$-curve pictured in Figure \ref{pretzel}. 
The three constituent knots are the unknot, and the torus knots $T(p+q, 2)$ and $T(p, 2)$. 
By Theorem \ref{main}, $\det(\vartheta) = \det( T(p+q, 2)) \cdot \det ( T(p, 2)) = (p+q)\cdot p = p^2 + pq$. 
The pretzel knot $P(p,k,p)$ is the strongly invertible knot that corresponds with $\vartheta(p,q)$, and it also has determinant $p^2+pq$. 
\end{example}

\begin{example}
Consider the strongly invertible knot $(K, h) = 9_{48}$, pictured with an axis of involution in Figure \ref{example-948}. Its quotient under the involution, together with the axis, forms a spatial theta curve whose diagram contains 9 crossings. The two constituent knots  $K_{ab}= 3_1$ and $K_{bc}= 6_1$ are shown in the figure. The determinants of $3_1, 6_1, 9_{48}$ are $3, 9, 27$. The Tait graphs $G, G^\perp$ for $(K, h)$ and $G_{ij}, G^\perp_{ij}$ for $K_{ij}$ are also illustrated. 
\end{example}

\begin{figure}	
	\includegraphics[width=4in]{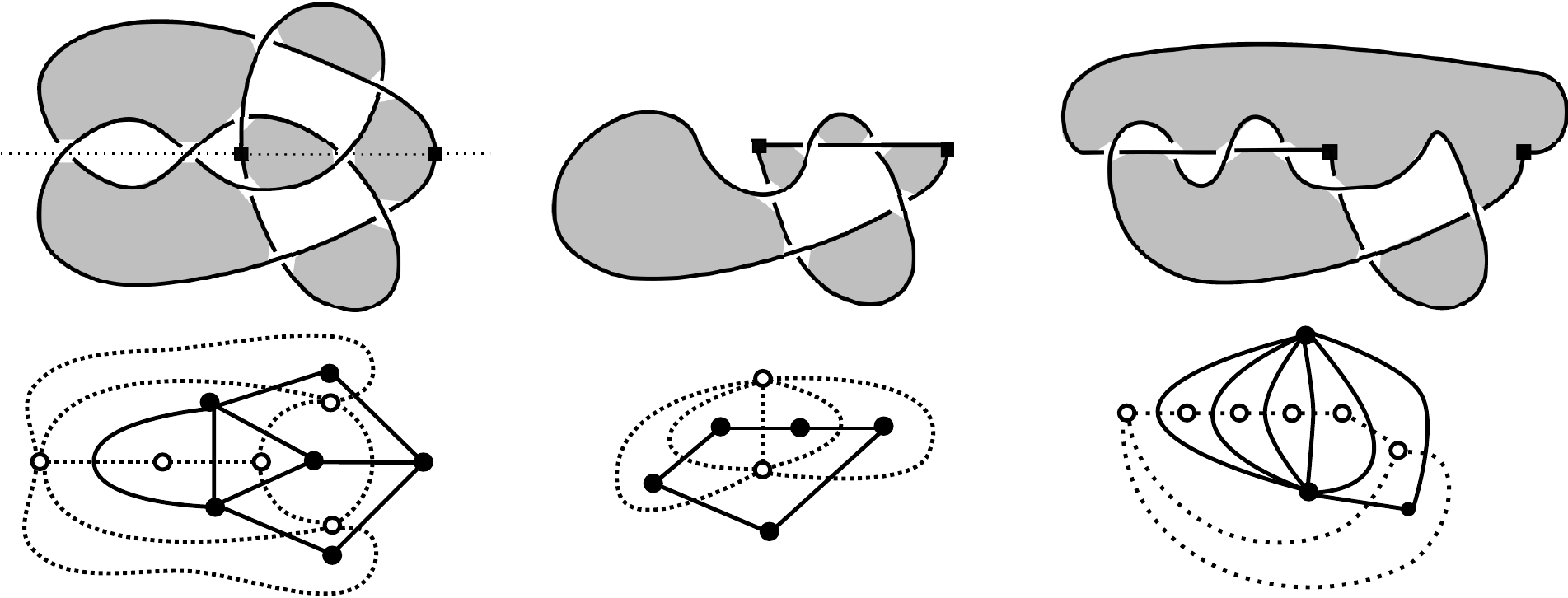}
	\caption{(Left) Strongly invertible knot $9_{48}$ with axis of involution. (Center) Constituent knot $K_{ab}$ is the trefoil $3_1$. (Right) Constituent knot $K_{bc}$ is the knot $6_1$. Corresponding Tait graphs and their duals are shown below.}
	\label{example-948}
\end{figure}

\begin{example}
Constituent knots for all theta curves in the Litherland-Moriuchi table were previously determined by Baker, Buck and O'Donnol in \cite[Table 2]{BBO}. We can now apply Theorem \ref{main} to compute the values of $\det(\vartheta)$ for all of the theta curves tabulated. An augmented table including the values of $\det(\vartheta)$ and $\det(K)$ for constituent knots is displayed as Table \ref{det table} below.
\end{example}

\section{Discussion}

Our definition of $\det(\vartheta)$ implicitly relies on the fact that a theta curve is a spatial trivalent planar graph admitting a 3-edge coloring, also called a \emph{Tait coloring}. Such Tait colorings play an important role in both the history of the four color theorem \cite{Tait} and in current gauge theoretic strategies for a new proof \cite{KM1}. 
In a completely different direction, the study of spatial theta curves is experiencing growing relevance in mathematical modeling of DNA replication and protein entanglement \cite{OSB, Sulkowska, DTGSS}.  This motivates a general effort to tabulate and differentiate theta curves \cite{Moriuchi}, and to further develop invariants and properties of these objects as in \cite{Wolcott, Litherland, Yamada, Kauffman:graphs, Kauffman:theta, BBO, BBMOT} (to name just a few). 

In this article, we present one possible definition of $\det(\vartheta)$. It is computed combinatorially, but is essentially a measure of the homology of the Klein cover. Alternatively, given any Alexander polynomial-type invariant, one could evaluate it at $t=-1$ to obtain a (possibly equivalent) definition. A generalization of the Alexander module was given by Litherland in \cite{Litherland}. An Alexander polynomial invariant for MOY-graphs was defined by Bao-Wu \cite{BaoWu}, but its specialization at $t=-1$ apparently counts the number of spanning trees of the underlying abstract graph. 
\begin{question}Two questions we pose:
\begin{itemize}
\item Do other definitions of the determinant that come from other strategies agree with the definition presented here?
\item Can a spanning tree enumeration strategy can be used to calculate $\det(\vartheta)$ for non-simple theta curves or other Klein graphs?
\end{itemize}
\end{question}

\subsection*{Acknowledgements}
We thank Ken Baker, Dorothy Buck and Danielle O'Donnol for providing the \LaTeX~for Table \ref{det table}, and Glenn Hurlbert for helpful discussions. ME, RI, and AHM were partially supported at VCU by The Thomas F. and Kate Miller Jeffress Memorial Trust, Bank of America, Trustee. RI and AHM were partially supported by National Science Foundation DMS-2204148. 
 
\begin{table}[htp]
  \caption{Theta curves through 7 crossings, their constituents, and their determinants. All of the constituent knots in the Litherland-Moriuchi table are simple. }
  \ra{1.2}
  \scriptsize
\begin{center}
\begin{multicols}{3}
\begin{tabular}{@{}lccccc@{}}\toprule
    $\vartheta$  &\multicolumn{2}{c}{C. Knots} & \multicolumn{2}{r}{det($K_{ij}$)} & det($\vartheta$) \\
    \midrule
    $\mb{3_1}$ & 2x$0_1$ & $3_1$ & &1,3& 3\\ 
    $\mb{4_1}$ & 2x$0_1$ & $4_1$ & &1,5& 5\\ 
    $\mb{5_1}$ & 3x$0_1$ &  & &1& 1\\ 
    $\mb{5_2}$ & 2x$0_1$ & $3_1$ & &1,3& 3\\ 
    $\mb{5_3}$ & 2x$0_1$ & $5_1$ & &1,5& 5\\ 
    $\mb{5_4}$ & $0_1$ & $3_1$ & $5_1$ &3,5& 15 \\ 
    $\mb{5_5}$ & 2x$0_1$ & $5_2$ & &1,7& 7\\ 
    $\mb{5_6}$ & 2x$0_1$ & $5_2$ & &1,7& 7\\ 
    $\mb{5_7}$ & $0_1$ & $3_1$ & $5_2$&3,7& 21\\ 
    $\mb{6_1}$ & 3x$0_1$ &  & &1& 1\\ 
    $\mb{6_2}$ & 2x$0_1$ & $3_1$ & &1,3& 3\\ 
    $\mb{6_3}$ & $0_1$ & $3_1$ & $4_1$ &3,5& 15\\ 
    $\mb{6_4}$ & $0_1$ & $3_1$ & $4_1$ &3,5& 15\\ 
    $\mb{6_5}$ & 2x$0_1$ & $6_1$ & &1,9& 9\\ 
     $\mb{6_6}$ & 2x$0_1$ & $6_1$ & &1,9& 9\\ 
     $\mb{6_7}$ & 2x$0_1$ & $6_1$ & &1,9 & 9 \\ 
     $\mb{6_8}$ & $0_1$ & $4_1$ & $6_1$ &5,9 & 45 \\ 
     $\mb{6_9}$ & 2x$0_1$ & $6_2$ & &1,11 &11 \\ 
     $\mb{6_{10}}$ & 2x$0_1$ & $6_2$ & &1,11& 11\\
     $\mb{6_{11}}$ & 2x$0_1$ & $6_2$ & &1,11& 11\\ 
      $\mb{6_{12}}$ & $0_1$ & $3_1$ & $6_2$ &3,11& 33\\ 
      $\mb{6_{13}}$ & $0_1$ & $4_1$ & $6_2$ &5,11& 55\\ 
     $\mb{6_{14}}$ & 2x$0_1$ & $6_3$ & &1,13& 13\\ 
     $\mb{6_{15}}$ & 2x$0_1$ & $6_3$ & &1,13& 13\\ 
     $\mb{6_{16}}$ & $0_1$ & $3_1$ & $6_3$ &3,13& 39\\ 
     $\mb{7_1}$ & 3x$0_1$ &  & &1& 1\\ 
    $\mb{7_2}$ & 3x$0_1$ &  & &1& 1\\ 
    $\mb{7_3}$ & 3x$0_1$ &  & &1& 1\\ 
    $\mb{7_4}$ & 3x$0_1$ &  & &1& 1\\ 
    $\mb{7_5}$ & 2x$0_1$ & $3_1$ & &1,3& 3\\ 
   \bottomrule
      \end{tabular}

\begin{tabular}{@{}lccccc@{}}\toprule
    $\vartheta$  &\multicolumn{2}{c}{C. Knots} & \multicolumn{2}{r}{det($K_{ij}$)} & det($\vartheta$) \\
    \midrule
    
     $\mb{7_6}$ & 2x$0_1$ & $3_1$ & &1,3& 3\\ 
    $\mb{7_7}$ & 2x$0_1$ & $3_1$ & &1,3& 3\\ 
    $\mb{7_8}$ & $0_1$ & $3_1$ &$3_1$ &3,3& 9\\ 
    $\mb{7_9}$ & $0_1$ & $3_1$ &$3_1$ &3,3& 9\\ 
    $\mb{7_{10}}$ & $0_1$ & $3_1$ &$3_1$ &3,3& 9\\ 
    $\mb{7_{11}}$ & 2x$0_1$ & $5_2$ & &1,7& 7\\ 
    $\mb{7_{12}}$ & 2x$0_1$ & $4_1$ & &1,5& 5\\ 
    $\mb{7_{13}}$ & 2x$0_1$ & $4_1$ & &1,5& 5\\ 
    $\mb{7_{14}}$ & $0_1$ & $4_1$ &$4_1$ &5,5& 25\\ 
    $\mb{7_{15}}$ & 2x$0_1$ & $5_1$ & &1,5& 5\\ 
       $\mb{7_{16}}$ & 2x$0_1$ & $5_1$ & &1,5& 5\\ 
    $\mb{7_{17}}$ & 2x$0_1$ & $5_1$ & &1,5& 5\\ 
    $\mb{7_{18}}$ & $0_1$ & $5_1$ & $5_2$&5,7& 35\\ 
    $\mb{7_{19}}$ & 2x$0_1$ & $5_2$ & &1,7& 7\\ 
    $\mb{7_{20}}$ & 2x$0_1$ & $5_2$ & &1,7& 7\\ 
    $\mb{7_{21}}$ & 2x$0_1$ & $5_2$ & &1,7& 7\\ 
    $\mb{7_{22}}$ & $0_1$ & $3_1$ & $5_2$&3,7& 21\\ 
    $\mb{7_{23}}$ & $0_1$ & $4_1$ & $5_2$&5,7& 35\\ 
    $\mb{7_{24}}$ & $0_1$ & $4_1$ & $5_2$&5,7& 35\\ 
    $\mb{7_{25}}$ & 2x$0_1$ & $7_1$ & &1,7& 7\\ 
    $\mb{7_{26}}$ & $0_1$ & $3_1$ & $7_1$&3,7& 21\\ 
    $\mb{7_{27}}$ & $0_1$ & $5_1$ & $7_1$&5,7& 35\\ 
    $\mb{7_{28}}$ & 2x$0_1$ & $7_2$ & &1,11& 11\\ 
    $\mb{7_{29}}$ & 2x$0_1$ & $7_2$ & &1,11& 11\\ 
    $\mb{7_{30}}$ & 2x$0_1$ & $7_2$ & &1,11& 11\\ 
    $\mb{7_{31}}$ & $0_1$ & $3_1$ & $7_2$&3,11& 33\\ 
    $\mb{7_{32}}$ & $0_1$ & $5_2$ & $7_2$ &7,11& 77\\ 
    $\mb{7_{33}}$ & 2x$0_1$ & $7_3$ & &1,13& 13\\ 
    $\mb{7_{34}}$ & 2x$0_1$ & $7_3$ & &1,13& 13\\ 
    $\mb{7_{35}}$ & $0_1$ & $3_1$ & $7_3$&3,13& 39\\    \bottomrule
          \end{tabular}

\begin{tabular}{@{}lccccc@{}}\toprule
    $\vartheta$  &\multicolumn{2}{c}{C. Knots} & \multicolumn{2}{r}{det($K_{ij}$)} & det($\vartheta$) \\
    \midrule
    
    $\mb{7_{36}}$ & $0_1$ & $5_1$ & $7_3$&5,13& 65\\ 
    $\mb{7_{37}}$ & $0_1$ & $5_2$ & $7_3$&7,13& 91\\ 
    $\mb{7_{38}}$ & 2x$0_1$ & $7_4$ & &1,15& 15\\ 
    $\mb{7_{39}}$ & 2x$0_1$ & $7_4$ & &1,15& 15\\ 
    $\mb{7_{40}}$ & $0_1$ & $3_1$ & $7_4$ &3,15& 45\\ 
    $\mb{7_{41}}$ & $0_1$ & $3_1$ & $7_4$ &3,15& 45\\ 
    $\mb{7_{42}}$ & $0_1$ & $5_2$ & $7_4$ &7,15& 105\\ 
    $\mb{7_{43}}$ & 2x$0_1$ & $7_5$ & &1,17& 17\\ 
    $\mb{7_{44}}$ & 2x$0_1$ & $7_5$ & &1,17& 17\\ 
    $\mb{7_{45}}$ & $0_1$ & $3_1$ & $7_5$ &3,17& 51\\ 
    $\mb{7_{46}}$ & $0_1$ & $3_1$ & $7_5$ &3,17& 51\\ 
    $\mb{7_{47}}$ & $0_1$ & $3_1$ & $7_5$ &3,17& 51\\ 
    $\mb{7_{48}}$ & $0_1$ & $5_1$ & $7_5$ &5,17& 85\\ 
    $\mb{7_{49}}$ & $0_1$ & $5_2$ & $7_5$ &7,17& 119\\ 
    $\mb{7_{50}}$ & 2x$0_1$ & $7_6$ & &1,19& 19\\ 
    $\mb{7_{51}}$ & 2x$0_1$ & $7_6$ & &1,19& 19\\ 
    $\mb{7_{52}}$ & 2x$0_1$ & $7_6$ & &1,19& 19\\ 
    $\mb{7_{53}}$ & 2x$0_1$ & $7_6$ & &1,19& 19\\ 
    $\mb{7_{54}}$ & 2x$0_1$ & $7_6$ & &1,19& 19\\ 
    $\mb{7_{55}}$ & $0_1$ & $3_1$ & $7_6$&3,19& 57\\ 
    $\mb{7_{56}}$ & $0_1$ & $3_1$ & $7_6$&3,19& 57\\ 
    $\mb{7_{57}}$ & $0_1$ & $4_1$ & $7_6$&5,19& 95\\ 
    $\mb{7_{58}}$ & $0_1$ & $5_2$ & $7_6$&7,19& 133\\ 
    $\mb{7_{59}}$ & 2x$0_1$ & $7_7$ & &1,21& 21\\ 
    $\mb{7_{60}}$ & 2x$0_1$ & $7_7$ & &1,21& 21\\ 
    $\mb{7_{61}}$ & 2x$0_1$ & $7_7$ & &1,21& 21\\ 
    $\mb{7_{62}}$ & 2x$0_1$ & $7_7$ & &1,21& 21\\ 
    $\mb{7_{63}}$ & 2x$0_1$ & $7_7$ & &1,21& 21\\ 
    $\mb{7_{64}}$ & $0_1$ & $3_1$ & $7_7$&3,21& 63\\ 
    $\mb{7_{65}}$ & $0_1$ & $4_1$ & $7_7$&5,21& 105\\ 
    \bottomrule

      \end{tabular}
 \end{multicols}
\end{center}

\label{det table}
 \end{table}

\bibliographystyle{alpha}
\bibliography{bibliography}

\end{document}